\theoremstyle{plain}
\numberwithin{equation}{section}
\newtheorem{proposition}[equation]{Proposition}
\newtheorem{theorem}{Theorem}
\newtheorem{lemma}[equation]{Lemma}
\newtheorem{corollary}[equation]{Corollary}
\theoremstyle{definition}
\newtheorem{remark}[equation]{Remark}
\newtheorem{example}[equation]{Example}
\def    \R  {{\mathbb R}}
\def    \Z  {{\mathbb Z}}
\def    \Q  {{\mathbb Q}}
\def    \CP {{\mathbb {CP}}}
\def    \C  {{\mathbb C}}
\def    \index  {{\operatorname{index}}}
\def  \deg {{\operatorname{deg}}}
\def    \Tilde  {\widetilde}
\def    \ut     {\Tilde{u}}
\def    \Gt     {\Tilde{G}}
\begin{document}
\title[Ham circle actions with almost minimal isolated fixed points] {Hamiltonian circle actions with almost minimal isolated fixed points}

\author{Hui Li} 
\address{School of Mathematical Sciences\\
                Soochow University, Suzhou\\
                215006, China}
        \email{hui.li@suda.edu.cn}

\thanks{2010 classification.  53D05, 53D20, 55N25, 57R20, 32H02} \keywords{Symplectic manifold,  Hamiltonian circle action, equivariant cohomology, Chern classes, K\"ahler manifold, biholomorphism,  symplectomorphism.}
\thanks{The author is supported by the NSFC grant K110712116.}
\begin{abstract}
Let the circle act in a Hamiltonian fashion on a connected compact symplectic manifold $(M, \omega)$ of dimension $2n$. Then the $S^1$-action has at least $n+1$ fixed points. In a previous paper, we study the case when the fixed point set consists of precisely $n+1$ isolated points.
In this paper, we study the case when the fixed point set  consists of exactly $n+2$ isolated points. We show that in this case $n$ must be even. We find equivalent
conditions on the first Chern class of $M$ and a particular weight of the $S^1$-action. 
We also show that the particular weight  can completely determine the integral cohomology ring and the total Chern class of $M$, and the sets of weights of the $S^1$-action at all the fixed points. We will see that all these data are isomorphic to those of known examples,  $\Gt_2(\R^{n+2})$ with $n\geq 2$ even, equipped with standard circle actions. 
\end{abstract}

 \maketitle

\section{Introduction}
Consider a circle action on a connected compact  $2n$-dimensional symplectic manifold $(M, \omega)$ with moment map $\phi$. By Morse theory, the fixed point set $M^{S^1}$ contains
at least $n+1$ points. In \cite{L2}, we study the case when $M^{S^1}$ consists of exactly $n+1$ isolated points.
In this paper, we consider the case when $M^{S^1}$ consists of exactly $n+2$ isolated points, in which case, we call the action has {\bf almost minimal}  isolated fixed points.
Such known examples  are $\Gt_2(\R^{n+2})$ with $n\geq 2$ even, see  Example~\ref{grasse}.

Let $(M, \omega)$ be a connected compact Hamiltonian $S^1$-manifold of dimension $2n$ with almost minimal isolated fixed points, and let $\phi$ be the moment map. By Lemma~\ref{order}, the fixed points, denoted 
$P_0$,  $\cdots$, $P_{\frac{n}{2}-1}$, $P_{\frac{n}{2}}$, $P_{(\frac{n}{2})'}$, $P_{\frac{n}{2}+1}$, $\cdots$, $P_n$, can be labelled to satisfy
$$\phi(P_0)  < \cdots < \phi(P_{\frac{n}{2}-1}) < \phi(P_{\frac{n}{2}})\leq\phi(P_{(\frac{n}{2})'})<\phi(P_{\frac{n}{2}+1}) < \cdots < \phi(P_n).$$
In the statements of our main theorems below, we are referring to this order
of moment map values of the fixed points.

For a symplectic $S^1$-manifold $M$ with isolated fixed points, in a neighborhood of each fixed point $P$, the $S^1$-action is equivalent to an $S^1$ linear action 
on $T_P M$, so there is a set of non-zero integers, called the {\it weights} of the $S^1$-action at the fixed point $P$. 

Our main results are as follows.
\begin{theorem}\label{equiv}
Let the circle act on a connected compact $2n$-dimensional symplectic manifold $(M, \omega)$ with moment map $\phi\colon M\to\R$. Assume $[\omega]$ is a
primitive integral class and the fixed point set $M^{S^1}$ consists of $n+2$ isolated points, denoted $M^{S^1}=\big\{P_0,  \cdots, P_{\frac{n}{2}-1}, P_{\frac{n}{2}}, P_{(\frac{n}{2})'}, P_{\frac{n}{2}+1}, \cdots, P_n\big\}$. Then for any $i, j\in \big\{0, \cdots, \frac{n}{2}, (\frac{n}{2})', \cdots, n\big\}$, $\phi(P_i)-\phi(P_j)\in \Z$, and $c_1(M)= n[\omega]$ if and only if $\phi(P_{n-1})-\phi(P_0)=\phi(P_n)-\phi(P_1)$ holds and this integer occurs as a weight of the $S^1$-action at some fixed point.  In dimension $4$, for the ``if" part to hold, the class $[\omega]$ needs to be chosen suitably.        
\end{theorem}

\begin{theorem}\label{lw}
Let the circle act on a connected compact $2n$-dimensional symplectic manifold $(M, \omega)$ with moment map $\phi\colon M\to\R$. Assume $M^{S^1}$ consists of $n+2$ isolated points, i.e.,
$M^{S^1}=\big\{P_0,  \cdots, P_{\frac{n}{2}-1}, P_{\frac{n}{2}}, P_{(\frac{n}{2})'}, P_{\frac{n}{2}+1}, \cdots, P_n\big\}$. Then $n\geq 2$ must be even. 
Assume $[\omega]$ is an integral class.
If $\phi(P_{n-1})-\phi(P_0)=\phi(P_n)-\phi(P_1)$ holds and this integer occurs as a weight of the $S^1$-action at some fixed point, then all the following are true.
\begin{enumerate}
\item  The integral cohomology ring of $M$ is isomorphic to that of $\Gt_2(\R^{n+2})$.
\item  The total Chern class of $M$ is isomorphic to that of $\Gt_2(\R^{n+2})$.
\item  The sets of weights of the $S^1$-action at all the fixed points on $M$ are isomorphic to those of a standard circle action on $\Gt_2(\R^{n+2})$ (as in Example~\ref{grasse}).
\end{enumerate}
\end{theorem}

Theorem~\ref{lw} follows from Propositions~\ref{01i2}, \ref{weights-ring} and \ref{weights-tc}. 

\medskip

By  \cite[Prop. 4.2 and Sec. 5]{L}, if the manifold $M$ in Theorem~\ref{lw} is K\"ahler, and the $S^1$-action is holomorphic, then $M$ is $S^1$-equivariantly biholomorphic and $S^1$-equivariantly symplectomorphic to $\Gt_2(\R^{n+2})$ with $n\geq 2$ even.

In \cite[Theorem 6.17]{H}, Hattori studies compact {\it almost complex} $S^1$-manifold $M$ of dimension $2n$  with $n+2$ isolated fixed points. 
Under the assumption $c_1(M)=nx$, where $x\in H^2(M; \Z)$ is a generator, 
and under additional technical conditions, Hattori shows that the sets of weights at all the fixed points are isomorphic to those of a standard circle action on $\Gt_2(\R^{n+2})$ with $n\geq 2$ even, and he obtains
that $x^n [M] = 2$.  In our current work, for a compact symplectic Hamiltonian $S^1$-manifold $(M, \omega)$, we prove the equivalence of the condition $c_1(M)=nx$ and the particular weight as in Theorem~\ref{equiv}, and using the particular weight as a starting point, we give methods to prove $(1)$, $(2)$ and $(3)$ in Theorem~\ref{lw}.

In \cite{L1}, the author studies compact Hamiltonian $S^1$-manifolds with fixed point set consisting of two connected components and almost minimal in a certain sense. Recent related works on compact Hamiltonian $S^1$-manifolds with fixed point set minimal in a certain sense include \cite{{T}, {M}, {LT}, {L}, {L2}, {LOS}, {GS}, {JT}}.

\

The paper is organized as follows. In Section~\ref{almostmin}, we give examples of Hamiltonian $S^1$-manifolds with almost minimal isolated fixed points,  prove and present some basic and key results.  In Section~\ref{c1larg}, we prove Theorem~\ref{equiv}. 
In Section~\ref{weight-all}, we determine
the sets of weights of the $S^1$-action at all the fixed points, proving $(3)$ of Theorem~\ref{lw}. In Section~\ref{weightringc},  we determine the integral cohomology ring and the total Chern class of the manifold, proving
$(1)$ and $(2)$ of Theorem~\ref{lw}.

\section{examples and basic results}\label{almostmin}

In this section, we give examples, prove and present key basic results.

\begin{example}\label{grasse}
Let $\Gt_2(\R^{n+2})$ be the  Grassmanian of oriented $2$-planes in $\R^{n+2}$, with $n\geq 2$ even. This $2n$-dimensional manifold naturally arises as a coadjoint orbit of $SO(n+2)$, hence it has a natural K\"ahler structure and a Hamiltonian $SO(n+2)$ action.

Consider the $S^1\subset SO(n+2)$ action on $\Gt_2(\R^{n+2})$ induced by the $S^1$ action on
$\R^{n+2}= \C^{\frac{n+2}{2}}$ given by
$$\lambda\cdot \big(z_0, z_1, \cdots, z_{\frac{n}{2}}\big) = \big(\lambda^{b_0}z_0, \lambda^{b_1}z_1, \cdots, \lambda^{b_{\frac{n}{2}}}z_{\frac{n}{2}}\big),$$
where the $b_i$'s, with $i = 0, 1, \cdots, \frac{n}{2}$, are mutually {\it distinct} integers.
This action has $n+2$ isolated fixed points, denoted 
$$P_0,\, P_1,\, \cdots,\, P_{\frac{n}{2}-1},\, P_{\frac{n}{2}},\,  P_{(\frac{n}{2})'},\,  P_{\frac{n}{2}+1},\, \cdots,\,  P_n,$$ 
where for each $0\leq i\leq \frac{n}{2}$,  $P_i$ and $P_{n-i}$ are given by the plane $(0, \cdots,  z_i,  \cdots, 0)$ respectively with two different orientations. Here, we use the convention $n-\frac{n}{2}=(\frac{n}{2})'$.
Let $\phi$ be the moment map of this $S^1$-action. Then the $\phi(P_i)$'s are respectively
$$-b_0,\, \cdots, \,-b_{\frac{n}{2}}, \, b_{\frac{n}{2}}, \, \cdots, \, b_0,$$ assuming in the order of nondecreasing. Note that if $b_{\frac{n}{2}}\neq 0$, then 
$\phi(P_{\frac{n}{2}}) < \phi(P_{(\frac{n}{2})'})$, otherwise, $\phi(P_{\frac{n}{2}}) = \phi(P_{(\frac{n}{2})'})$.
The set of weights of the $S^1$-action at any $P_i$, where $i\in\big\{0,  \cdots, \frac{n}{2}, (\frac{n}{2})', \cdots, n\big\}$, is
$$\big\{w_{ij} = \phi(P_j)-\phi(P_i)\big\}_{j\neq i, n-i}.$$

The ring $H^*\big(\Gt_2(\R^{n+2}); \Z\big)$ is generated by 
$x$, $y$ and  $z$, where $\deg(x) = 2$ and $\deg(y) = \deg(z) = n$. The relations are $x^{\frac{n}{2}}=y+z$, $xy=xz= \frac{1}{2}x^{\frac{n}{2}+1}$, and $y^2=z^2 = \frac{1+(-1)^{\frac{n}{2}}}{2}x^{\frac{n}{2}}y=\frac{1+(-1)^{\frac{n}{2}}}{2}x^{\frac{n}{2}}z$.
The total Chern class $c\big(\Gt_2(\R^{n+2})\big) =\frac{(1+x)^{n+2}}{1+2x}$, in particular, $c_1\big(\Gt_2(\R^{n+2})\big) = nx$.
\end{example}

Next, we prove a key basic lemma.

\begin{lemma}\label{order}
Let the circle act on a connected compact $2n$-dimensional symplectic manifold $(M, \omega)$ with moment map $\phi\colon M\to\R$. Assume $M^{S^1}$ consists of $n+2$ isolated points. Then $n$ must be even, and we can denote                  
$M^{S^1}=\{P_0,  \cdots, P_{\frac{n}{2}-1}, P_{\frac{n}{2}}, P_{(\frac{n}{2})'}, P_{\frac{n}{2}+1}, \cdots, P_n\},$ 
where the points can be labelled so that $P_i$ has Morse index $2i$ for all $i$, and  $P_{(\frac{n}{2})'}$ has Morse index $n$. Moreover, the  cohomology groups of $M$ are
  \[H^k(M; \Z)  = \left\{ \begin{array}{ll}
           \Z,   &  \mbox{if $k$  is even, $0\leq k\leq 2n$,  and  $k\neq n$},\\
           \Z\oplus \Z,  &    \mbox{if $k=n$},\\
           0,    & \mbox{if $k$  is odd},
           \end{array} \right. \]
and
\begin{equation}\label{eqorder}
 \phi(P_0)  < \cdots < \phi(P_{\frac{n}{2}})\leq\phi(P_{(\frac{n}{2})'})<\phi(P_{\frac{n}{2}+1}) < \cdots < \phi(P_n).
\end{equation}
\end{lemma}

\begin{proof}
Since $1, [\omega], \cdots, [\omega]^n$ are nonzero cohomology classes of $M$,
the even Betti numbers $b_{2i}(M)\geq 1$ for all $0\leq 2i\leq 2n$. Since $\phi$ is a perfect Morse function,
there is at least one fixed point, namely $P_i$, of index $2i$ for each $0\leq 2i\leq 2n$, which contributes to $b_{2i}$. By assumption, there is a remaining fixed point, let $2k$ be its Morse index. Since $b_{2i}(M)= b_{2n-2i}(M)$ for all $0\leq 2i\leq 2n$ by Poincar\'e duality,  we must have $2k=2n-2k$, which gives $2k=n$, i.e., $n$ is even. We denote the remaining fixed point by $P_{(\frac{n}{2})'}$, which has Morse index $n$. 
By Morse theory, $M$ has a natural CW-structure, with cells given by the negative disk bundles of the fixed points. By cellular cohomology theory, the groups $H^k(M; \Z)$ 
are as claimed. 

By  \cite[Lemma 3.1]{T} (cited as Lemma 2.3 in \cite{L2}), 
for each $P_i$, $\index(P_i)\leq 2l$, where $l$ is the number of fixed points
below $P_i$, we obtain
$\phi(P_0) < \phi(P_1) < \cdots < \phi(P_{\frac{n}{2}})\leq\phi(P_{(\frac{n}{2})'})$. Similarly, using $-\phi$, we obtain
$-\phi(P_n) < -\phi(P_{n-1}) < \cdots < -\phi(P_{(\frac{n}{2})'})\leq -\phi(P_{\frac{n}{2}})$.
The two inequalities give (\ref{eqorder}).
\end{proof}

\begin{remark}
From now on, when we use the notation 
$$M^{S^1}=\big\{P_0,  \cdots, P_{\frac{n}{2}-1}, P_{\frac{n}{2}}, P_{(\frac{n}{2})'}, P_{\frac{n}{2}+1}, \cdots, P_n\big\},$$ we mean that each point has the
Morse index as in Lemma~\ref{order}, and these points satisfy (\ref{eqorder}). We will use these facts tacitly. 
\end{remark}

Next, we get a basis of the equivariant and ordinary cohomology of the manifold.
When we speak about $S^1$-equivariant cohomology, we use $t\in H^2(\CP^{\infty}, \Z)$ to denote a generator.
For basic material in equivariant cohomology, we refer to \cite{L2}.

\begin{proposition}\label{basis}
Let $(M, \omega)$ be a connected compact $2n$-dimensional Hamiltonian $S^1$-manifold with moment map 
$\phi\colon M\to\R$. Assume 
$$M^{S^1}=\big\{P_0, \cdots, P_{\frac{n}{2}-1}, P_{\frac{n}{2}}, P_{(\frac{n}{2})'}, P_{\frac{n}{2}+1}, \cdots, P_n\big\}.$$ 
Let  $I=\big\{0, \cdots, \frac{n}{2}, (\frac{n}{2})', \cdots, n\big\}$. 
Then as an
$H^*(\CP^{\infty}; \Z)$-module, $H^*_{S^1}(M; \Z)$ has a basis 
$\big\{\Tilde\alpha_i \,|\, i\in I\big\}$ such that for any $i\in I$,
$$\Tilde\alpha_i|_{P_i} = \Lambda^-_i t^i, \,\,\, \Tilde\alpha_i|_{P_j} =0, \,\,\,\forall \,\, P_j \,\,\,\mbox{with}\,\,\, \index (P_j)\leq\index (P_i),$$
where $\Lambda^-_i$ is the product of the negative weights at $P_i$.
Moreover,  $\big\{\alpha_i = r(\Tilde\alpha_i)\,|\, i\in I\big\}$ is a basis for $H^*(M; \Z)$, where $r\colon H^*_{S^1}(M; \Z)\to H^*(M; \Z)$ is the natural restriction.
\end{proposition}

\begin{proof}
 Using standard argument in equivariant cohomology,
we get the basis of $H^*_{S^1}(M; \Z)$ satisfying the other conditions except that $\Tilde\alpha_{\frac{n}{2}}|_{P_{(\frac{n}{2})'}}=0$.  If $\Tilde\alpha_{\frac{n}{2}}|_{P_{(\frac{n}{2})'}} = a\Tilde\alpha_{(\frac{n}{2})'}|_{P_{(\frac{n}{2})'}}\neq 0$, where $a\in\Z$, then replace $\Tilde\alpha_{\frac{n}{2}}$ by  $\Tilde\alpha_{\frac{n}{2}}-a\Tilde\alpha_{(\frac{n}{2})'}$, still denoting it $\Tilde\alpha_{\frac{n}{2}}$.

    Since the fixed points are isolated, hence has no torsion cohomology, 
the restriction map $r$ is onto (\cite[Sec. 2]{LT}), so $\big\{\alpha_i = r(\Tilde\alpha_i)\,|\, i\in I\big\}$ is a basis for $H^*(M; \Z)$.
\end{proof}

\begin{corollary}\label{cor2}
Let $(M, \omega)$ be a connected compact $2n$-dimensional Hamiltonian $S^1$-manifold with moment map 
$\phi\colon M\to\R$. Assume 
$$M^{S^1}=\big\{P_0, \cdots, P_{\frac{n}{2}-1}, P_{\frac{n}{2}}, P_{(\frac{n}{2})'}, P_{\frac{n}{2}+1}, \cdots, P_n\big\}.$$ 
If $\Tilde\alpha\in H^{2k}_{S^1}(M; \Z)$ is a class such that $\Tilde\alpha|_{P_i}=0$
for all $P_i$'s with $\phi(P_i) < a$,  then 
$$\Tilde\alpha = \sum_i a_i \Tilde\alpha_i,$$
where the sum is over the indices $i$'s such that $\phi(P_i)\geq a$ and $\deg(\Tilde\alpha_i)\leq 2k$, and $a_i\in H^*(\CP^{\infty}; \Z)$.  
Here, $i\in I = \big\{0, \cdots, \frac{n}{2}, (\frac{n}{2})', \cdots, n\big\}$.
\end{corollary}

Next, we state some known results which we will use in the next sections.

\begin{lemma}\cite[Lemmas 2.1 and 2.2]{L2}\label{ut}
Let the circle act  on a connected compact symplectic manifold $(M,\omega)$ with moment map $\phi \colon M \to \R$.  Assume the fixed points are isolated. Then there exists $\ut =[\omega -\phi t]\in H_{S^1}^2(M;\R)$ such that for any fixed point $P$,
$$\ut|_P = - \phi(P)t.$$

If $[\omega]$ is an integral class, then $\ut$ is an integral class.  In this case, for any two fixed points $P$ and $Q$, $\phi(P) - \phi(Q) \in \Z$.  
If $\Z_k$ is the stabilizer group of some point on $M$,  then for any two fixed points $P$ and
$Q$ on the same connected component of $M^{\Z_k}$, we have $k\,|\left(\phi(P) - \phi(Q)\right)$.
\end{lemma}

In Lemma~\ref{ut}, $M^{\Z_k}$ is the {\bf $\Z_k$-isotropy submanifold}, 
i.e., the set of points pointwise fixed by $\Z_k$ but not pointwise fixed by $S^1$.

By Lemma~\ref{ut}, if $[\omega]$ is an integral class, then any weight
of the $S^1$-action is no larger than the length of the interval $\phi(M)$.

\begin{theorem}\cite{AB, BV}\label{AB.BV}
Let the circle act on a compact oriented manifold $M$. Assume the fixed points are isolated.
Fix a class $\alpha\in H^*_{S^1}(M; \Q)$. Then as elements of $\Q(t)$,
$$\int_M \alpha = \sum_{P\subset M^{S^1}} \frac{\alpha|_P}{e^{S^1}(N_P)},$$
where the sum is over all the fixed points, and $e^{S^1}(N_P)$ is the equivariant Euler
class of the normal bundle to $P$.
\end{theorem}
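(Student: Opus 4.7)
The plan is to derive the formula from the equivariant localization theorem, which asserts that after inverting $t$ the restriction map
$$\iota^{*}\colon H^*_{S^1}(M;\Q) \otimes_{\Q[t]} \Q(t) \longrightarrow H^*_{S^1}(M^{S^1};\Q) \otimes_{\Q[t]} \Q(t)$$
is an isomorphism. First I would establish this isomorphism. The key observation is that on the complement $M \setminus M^{S^1}$ the action is locally free, so $H^*_{S^1}(M \setminus M^{S^1};\Q)$ is a torsion $\Q[t]$-module; consequently, the long exact sequence of the pair $(M, M \setminus M^{S^1})$ shows that after inverting $t$ the restriction map becomes bijective. This is exactly the localization step that underlies the ``injectivity'' statement (\ref{inj}) recalled above.

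Next, I would establish the equivariant Thom/self-intersection identity, namely that the Gysin push-forward $\iota_{*}\colon H^*_{S^1}(M^{S^1};\Q) \to H^{*+2n}_{S^1}(M;\Q)$ satisfies
$$\iota^{*} \iota_{*} \eta = \eta \cdot e^{S^1}(N), \qquad \eta \in H^*_{S^1}(M^{S^1};\Q),$$
where $N = \bigsqcup_{P} N_P$ is the normal bundle to the fixed set. Combining this with the localization isomorphism, the class $\iota_{*}\bigl(\iota^{*}\alpha / e^{S^1}(N)\bigr)$ has the same restriction to $M^{S^1}$ as $\alpha$ in the localized ring, so
$$\alpha = \iota_{*}\!\left(\frac{\iota^{*}\alpha}{e^{S^1}(N)}\right) \quad \text{in } H^*_{S^1}(M;\Q(t)).$$

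Finally, I would apply the equivariant push-forward $\pi_{*} = \int_M$ to both sides, using the naturality $\pi_{M,*} \circ \iota_{*} = \pi_{M^{S^1},*}$ together with the fact that for any class $f$ on the zero-dimensional set $M^{S^1}$ one has $\int_{M^{S^1}} f = \sum_P f|_P$, to obtain the claimed formula. The main technical obstacle is justifying the two ingredients above, i.e.\ the localization isomorphism and the equivariant self-intersection identity $\iota^{*}\iota_{*}\eta = \eta \cdot e^{S^1}(N)$. Both are standard but require some care: they can be proved either by a Mayer--Vietoris induction using the short exact sequence (\ref{shortexact}) over a filtration of $M$ by sublevel sets of $\phi$, or analytically in the Cartan model by constructing an equivariant primitive of $\alpha$ on $M \setminus M^{S^1}$ (using an invariant connection one-form $\theta$ with $\iota_X \theta = 1$) and applying Stokes' theorem to the complement of shrinking invariant tubular neighborhoods of the fixed points.
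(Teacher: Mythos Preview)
The paper does not give its own proof of this statement: Theorem~\ref{AB.BV} is quoted as the classical localization formula of Atiyah--Bott and Berline--Vergne, with a citation to \cite{AB, BV} and no argument supplied. So there is no proof in the paper to compare against; your outline is essentially the standard derivation of the ABBV formula (localization isomorphism after inverting $t$, the self-intersection identity $\iota^*\iota_*\eta = \eta\cdot e^{S^1}(N)$, then push forward to a point), and it is correct as a sketch.

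One small caveat: as stated in the paper, the theorem is for a circle action on a compact manifold with isolated fixed points, with no symplectic or Hamiltonian hypothesis. Your first suggested route for the two technical ingredients---a Mayer--Vietoris induction over sublevel sets of $\phi$ using the short exact sequence (\ref{shortexact})---is not available in that generality, since there is no moment map $\phi$ to filter by. The Cartan-model/Stokes argument you mention second (or a purely topological Mayer--Vietoris over an invariant tubular neighborhood of $M^{S^1}$ and its complement) does work in the generality stated, so the proposal stands once you commit to that version.
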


\begin{lemma}\cite[Lemma 2.6]{T}\label{equalmod}
Let the circle act on a compact symplectic manifold $(M, \omega)$. Let $p$ and $q$ be fixed points which lie on the same connected component of $M^{\Z_k}$ for some $k > 1$. Then the $S^1$ weights at $p$ and $q$ are equal modulo $k$.
\end{lemma}

\section{on the first Chern class of the manifold  ---  the proof of Theorem \ref{equiv}}\label{c1larg}

In this section, we prove Theorem  \ref{equiv}.

In a symplectic $S^1$-manifold $(M, \omega)$ with isolated fixed points,
if $w>0$ is a weight of the $S^1$-action at a fixed point $P$, $-w$ is a weight of the $S^1$-action at a fixed point $Q$, and $P$ and $Q$ are on the same connected component of
$M^{\Z_w}$, then we say that {\bf there is a weight $w$ from $P$ to $Q$}, or $w$ is a weight from $P$ to $Q$. 
When the signs of $w$ at $P$ and at $Q$ are clear, we also say that {\bf there is a weight $\pm w$ between $P$ and $Q$}. 
It is known (see \cite{H} for example) that the set $W^+$ of all the positive weights and the set $W^-$ of all the negative weights at all the fixed points  satisfy $W^- = - W^+$. We will use this fact tacitly.

\smallskip

The following Lemma~\ref{mlarge} is a consequence of Lemmas 4.2 and 4.4 in \cite{L2}.
\begin{lemma}\label{mlarge}
Let the circle act on a connected compact symplectic manifold $(M, \omega)$ with moment map $\phi\colon M\to\R$. Assume 
$$M^{S^1}=\big\{P_0, \cdots, P_{\frac{n}{2}-1}, P_{\frac{n}{2}}, P_{(\frac{n}{2})'}, P_{\frac{n}{2}+1}, \cdots, P_n\big\}.$$ 
 Let $w>0$ be the largest in the set of all the weights of the $S^1$-action at all the fixed points. Then there are $P_i$ and $P_j$ with $\index(P_i)\leq\index(P_j)$, such that $w$ is a weight from $P_i$ to $P_j$. Here,
$i, j\in I = \big\{0, \cdots, \frac{n}{2}, (\frac{n}{2})', \cdots, n\big\}$. 
If $c_1(M) = k [\omega]$,  then $\frac{1}{2}\index(P_j) -\frac{1}{2}\index(P_i) +1 = k\frac{\phi(P_j)-\phi(P_i)}{w}$. 
\end{lemma}

We will use Lemma~\ref{lh} in the proof of Lemma~\ref{4dim}.
\begin{lemma}\label{lh}
Let the circle act on a connected compact symplectic manifold $(M, \omega)$ with moment map $\phi\colon M\to\R$. Assume 
$M^{S^1}$ consists of isolated points. Then for any index $2$ fixed point, there is a weight between it and a fixed point below it (relative to the value of $\phi$). Similarly, for any coindex $2$ fixed point, there is a weight between it and a fixed point above it.
\end{lemma}

\begin{proof}
Let $P\in M^{S^1}$ be of index $2$ and $-w$ be the negative weight at $P$. 
Since $P$ has only one negative weight, the connected component $C$ of $M^{\Z_w}$ containing $P$ has $P$ as an index $2$ fixed point.
Since $C$ is compact and symplectic, it contains a unique fixed point $Q$ as the minimum of $\phi|_C$, so $\phi(Q) < \phi(P)$. Note that $w$ is the smallest positive weight on $C$ (other
weights on $C$ must be multiples of $w$), 
using \cite[Proposition 3]{JT} (cited as Lemma 5.6 in \cite{L2}) on $C$, we get that $w$ is a positive weight at $Q$.  If $w=1$, we regard $C=M$. The other claim follows similarly by using $-\phi$.
\end{proof}

To prove Theorem~\ref{equiv}, note that when $\dim(M)=4$, $\dim H^2(M)$ is $2$. In this case, we need to choose the class
$[\omega]$ suitably so that $c_1(M)$ is a multiple of $[\omega]$. We consider this case separately, at the same time, we obtain the sets of weights at all the fixed points.

\begin{lemma}\label{4dim}
Let the circle act on a connected compact $4$-dimensional symplectic manifold
$(M, \omega)$ with moment map $\phi\colon M\to\R$. Assume $[\omega]$ is an integral class and $M^{S^1}=\big\{P_0, P_1, P_{1'}, P_2\big\}$. 
Assume $\phi(P_{1'})-\phi(P_0)=\phi(P_2)-\phi(P_1)$ is a weight of the $S^1$-action at some fixed point. Then 
\begin{equation}\label{01=12}
\phi(P_1)-\phi(P_0)=\phi(P_2)-\phi(P_{1'}),
\end{equation}
and $[\omega]$ is primitive integral. Assume furthermore that $[\omega|_{S_1}]$ is
primitive integral, where $S_1$ is an invariant gradient sphere for any invariant metric from $P_1$ to $P_0$. Then  
the set of weights at any $P_i$  is
\begin{equation}\label{0112} 
\big\{w_{ij}=\phi(P_j)-\phi(P_i)\big\}_{j\neq i,\, 2-i},
\end{equation}
where $i, j \in \{0, 1, 1',  2\}$, and we use the convention $1+1' =2$; moreover, $c_1(M)=2[\omega]$.
\end{lemma}

\begin{proof}
The equality $\phi(P_{1'})-\phi(P_0)=\phi(P_2)-\phi(P_1)$ implies (\ref{01=12}).

Denote $w_{01'}=\phi(P_{1'})-\phi(P_0)$. If $\phi(P_1) < \phi(P_{1'})$, 
then $w_{01'}$ can only divide 
$\phi(P_{1'})-\phi(P_0)$, hence it can only be a weight between $P_0$ and $P_{1'}$.
If $\phi(P_1) = \phi(P_{1'})$, then by Lemma~\ref{lh}, each of $P_1$ and $P_{1'}$ has a weight with $P_0$, hence in this case $w_{01'}$ is also a weight between $P_0$ and $P_{1'}$ (not between $P_0$ and $P_2$, since there can only be two weights at $P_0$).  For any invariant metric, let $S_{1'}$ be
the gradient sphere from $P_{1'}$ to $P_0$ (if $\gamma$ is the gradient line from $P_{1'}$ to $P_0$, then $S^1\cdot \gamma$ is the gradient sphere), then  
$$\int_{S_{1'}} [\omega] =\frac{\phi(P_{1'})-\phi(P_0)}{w_{01'}}=1.$$ This means that 
$[\omega|_{S_{1'}}]$ is primitive integral, hence $[\omega]$ is primitive integral.
Let the weight between $P_1$ and $P_0$ be $w$. By assumption,  
$[\omega|_{S_1}]$ is primitive integral, similar to the above, we obtain that $|w| = \phi(P_1)-\phi(P_0)$. Denote $w_{01}= \phi(P_1)-\phi(P_0)$. By symmetry, or by using $-\phi$, similarly, we know that $w_{12}=\phi(P_2)-\phi(P_1)$ is a weight between $P_1$ and $P_2$, and $w_{1'2} = \phi(P_2)-\phi(P_{1'})$ is a weight between $P_{1'}$ and $P_2$.
The claim (\ref{0112}) follows.

Now consider the basis element $\Tilde\alpha_1$ and $\Tilde\alpha_{1'}$ of  $H^2_{S^1}(M; \Z)$
in Proposition~\ref{basis}. It is easy to check, by using (\ref{0112}) for $P_1$ and $P_{1'}$, that the restrictions of the degree $2$ classes $\ut +\phi(P_0)t$ and $\Tilde\alpha_1 + \Tilde\alpha_{1'}$ to the index $0$ and index $2$ fixed points $P_0$, $P_1$ and $P_{1'}$ are equal. Hence
$$\ut +\phi(P_0)t = \Tilde\alpha_1 + \Tilde\alpha_{1'}.$$
Restricting it to ordinary cohomology, we obtain $[\omega] = \alpha_1 + \alpha_{1'}$.
By the same argument, we obtain
$$c^{S^1}_1(M) = 2\Tilde\alpha_1 + 2 \Tilde\alpha_{1'} + \Gamma_0 t.$$
Restricting this to ordinary cohomology, we obtain $c_1(M)=2(\alpha_1 + \alpha_{1'})$.
So the claim $c_1(M) = 2 [\omega]$ follows.
\end{proof}

To prove Theorem~\ref{equiv}, let us first show that there is no weight
larger than the integer $\phi(P_{n-1})-\phi(P_0)=\phi(P_n)-\phi(P_1)$.

\begin{proposition}\label{0nnot}
Let the circle act on a connected compact symplectic manifold $(M, \omega)$ with moment map $\phi\colon M\to\R$. Assume $[\omega]$ is an integral class and
$M^{S^1}=\big\{P_0, \cdots, P_{\frac{n}{2}-1}, P_{\frac{n}{2}}, P_{(\frac{n}{2})'}, P_{\frac{n}{2}+1}, \cdots, P_n\big\}$.
Then the integer $\phi(P_n)-\phi(P_0)$ cannot be a weight at any fixed point.
\end{proposition}

\begin{proof}
Let $I=\big\{0, \cdots, \frac{n}{2}, (\frac{n}{2})', \cdots, n\big\}$. By Lemma~\ref{ut}, for any $i, j\in I$, $\phi(P_i)-\phi(P_j)\in\Z$. Suppose $w_{0n}=\phi(P_n)-\phi(P_0)$
is a weight at some fixed point. Since it only divides $\phi(P_n)-\phi(P_0)$,
it can only be a weight between $P_0$ and $P_n$, and it can only have
multiplicity $1$. Similar to the proof of \cite[Lemma 5.7]{L2}, using
$$\big\{\mbox{weights at $P_0$}\big\} = \big\{\mbox{weights at $P_n$}\big\} \mod w_{0n},$$
we obtain that the possible weights at $P_0$ and at $P_n$ are respectively in the sets 
\begin{equation}\label{set}
\big\{\phi(P_j)-\phi(P_0)\big\}_{j\in I, \,j\neq 0},\,\,\,\mbox{and}\,\,\, 
\big\{\phi(P_j)-\phi(P_n)\big\}_{j\in I, \,j\neq n}.
\end{equation}
Moreover, in the following two cases, $\phi(P_j)-\phi(P_0)$ is a weight at $P_0$ if and only if $\phi(P_j)-\phi(P_n)$ is a weight at $P_n$. 
\begin{enumerate}
\item  $\phi\big(P_{\frac{n}{2}}\big) < \phi\big(P_{(\frac{n}{2})'}\big)$, 
and $j\neq 0, n$. 
\item $\phi\big(P_{\frac{n}{2}}\big) = \phi\big(P_{(\frac{n}{2})'}\big)$, and
$j\neq 0, n, \frac{n}{2}, (\frac{n}{2})'$.
\end{enumerate}
Combining with a slight
modification of \cite[Lemma 5.1]{L2}, we can see that any weight occurs with multiplicity $1$. Since there are $n$ number of weights at $P_0$ and at $P_n$, and there are $n+1$ number of elements in each set in (\ref{set}), there exists $k\neq 0, n$ (also $k\neq 1, n-1$ if we use Lemma~\ref{lh})
such that $\phi(P_k)-\phi(P_0)$ does not occur as a weight
at $P_0$. By symmetry, $\phi(P_{n-k})-\phi(P_n)$ does not occur as a weight at $P_n$.
If $\phi\big(P_{\frac{n}{2}}\big) < \phi\big(P_{(\frac{n}{2})'}\big)$, or
if $\phi\big(P_{\frac{n}{2}}\big) = \phi\big(P_{(\frac{n}{2})'}\big)$ and
$k\neq \frac{n}{2}, (\frac{n}{2})'$, 
then by the fact above, $\phi(P_{n-k})-\phi(P_0)$ is also not a weight at $P_0$ (similarly $\phi(P_k)-\phi(P_n)$ is not a weight at $P_n$), so $P_0$ (and $P_n$) has less than $n$ number of weights, a contradiction. 
Now assume $\phi\big(P_{\frac{n}{2}}\big) = \phi\big(P_{(\frac{n}{2})'}\big)$,
and $k = (\frac{n}{2})'$ (similarly for $k=\frac{n}{2}$). It is conceivable that 
$\phi(P_{\frac{n}{2}})-\phi(P_0)$ occurs at $P_0$ and 
$\phi(P_{(\frac{n}{2})'})-\phi(P_0)$ does not occur at $P_0$, 
and $\phi(P_{(\frac{n}{2})'})-\phi(P_n)$ occurs at $P_n$ and 
$\phi(P_{\frac{n}{2}})-\phi(P_n)$ does not occur at $P_n$. We claim that this cannot happen. We look at it as follows. By symmetry, we may normalize $[\omega]$ and $\phi$ such that
$\phi(P_{\frac{n}{2}})-\phi(P_0) = \phi(P_n)-\phi(P_{\frac{n}{2}})$. Denote
$w = \phi(P_{\frac{n}{2}})-\phi(P_0)= \phi(P_n)-\phi(P_{\frac{n}{2}})$. Then $w_{0n} = 2 w$. Note that $w$, as a weight at 
$P_0$,  can only divide $\phi(P_{\frac{n}{2}})-\phi(P_0)$ and $\phi(P_n)-\phi(P_0)$. By \cite[Lemma 5.5]{L2}, $w$ is a weight between $P_0$ and 
$P_{\frac{n}{2}}$, and this number is a weight between $P_{(\frac{n}{2})'}$ and $P_n$. The isotropy submanifold $M^{\Z_w}$ containing $P_0$ contains also $P_{\frac{n}{2}}$, $P_n$ and $P_{(\frac{n}{2})'}$.  It is $4$-dimensional with exactly these $4$ fixed points, there is a weight
$w_{0n}$ between $P_0$ and $P_n$, and there is no weight between 
$P_{(\frac{n}{2})'}$ and $P_0$. By Lemma~\ref{lh} (used on $M^{\Z_w}$), the 
weights structure on this submanifold is not possible. 
\end{proof}

\begin{proof}[Proof of Theorem~\ref{equiv}]
First, by Lemma~\ref{ut}, we have $\phi(P_i)-\phi(P_j)\in\Z$  for any $i, j\in\{0, \cdots, \frac{n}{2}, (\frac{n}{2})', \cdots, n\}$.

If $\dim(M)=2n > 4$, by Lemma~\ref{order}, $H^2(M; \Z)=\Z$. So $c_1(M)=k[\omega]$ for some $k\in\Z$. Let $w>0$ be the largest weight of
the action at all the fixed points. By Lemma~\ref{mlarge},  $w$ is a weight from $P_i$ to $P_j$ with $\index(P_i)\leq\index(P_j)$. Then
$w|\big(\phi(P_j)-\phi(P_i)\big)$ by Lemma~\ref{ut}. By Lemma~\ref{mlarge}, $k=n$ implies that $w=\phi(P_{n-1})-\phi(P_0)=\phi(P_n)-\phi(P_1)$. Conversely, assume $\phi(P_{n-1})-\phi(P_0)=\phi(P_n)-\phi(P_1)$ holds and this integer occurs as a weight, by Proposition~\ref{0nnot}, this integer is the largest weight; since it only divides
$\phi(P_{n-1})-\phi(P_0)$ and $\phi(P_n)-\phi(P_1)$, it is a weight
between $P_0$ and $P_{n-1}$ (and is also a weight between $P_1$ and
$P_n$). By Lemma~\ref{mlarge}, $k=n$.

Now consider $\dim(M)=4$. If $c_1(M)=2[\omega]$, then the same argument as above yields that $\phi(P_{1'})-\phi(P_0)=\phi(P_2)-\phi(P_1)$ is the largest weight. For the converse, we do not know that $c_1(M)$ is a multiple of $[\omega]$;  the converse is by Lemma~\ref{4dim}. 
\end{proof}

\section{from the named weight to all the weights}\label{weight-all}

In this section, using the given weight, we find the sets of weights at all the fixed points, proving $(3)$ of Theorem~\ref{lw}.

For convenience, we state the following lemma.
\begin{lemma}\cite[Lemma 4.1]{L2}\label{sub}
Let the circle act on a connected compact symplectic manifold $(M, \omega)$ with moment map $\phi\colon M\to\R$. If $c_1(M)=k[\omega]$, then for any two fixed point set components
$F$ and $F'$,  we have $\Gamma_F - \Gamma_{F'} = k\big(\phi(F')-\phi(F) \big)$, where $\Gamma_F$ and $\Gamma_{F'}$ are respectively the sums of the weights at $F$ and $F'$.
\end{lemma}

We first find the sets of weights at $P_0$, $P_1$, $P_{n-1}$ and $P_n$.
\begin{lemma}\label{01n}
Let the circle act on a connected compact $2n$-dimensional symplectic manifold
$(M, \omega)$ with moment map $\phi\colon M\to\R$. Assume $[\omega]$ is an integral class and
$M^{S^1}=\big\{P_0, \cdots, P_{\frac{n}{2}-1}, P_{\frac{n}{2}}, P_{(\frac{n}{2})'}, P_{\frac{n}{2}+1}\cdots, P_n\big\}$. 
Assume $\phi(P_{n-1})-\phi(P_0)=\phi(P_n)-\phi(P_1)$ occurs as a weight of the $S^1$-action at some fixed point. Then 
\begin{equation}\label{01=nn-1}
\phi(P_1)-\phi(P_0) = \phi(P_n)-\phi(P_{n-1}),
\end{equation}
$[\omega]$ is primitive integral, and for $i=0$, $1$, $n-1$, $n$,
the set of weights at $P_i$ is
$$\big\{w_{ij}=\phi(P_j)-\phi(P_i)\big\}_{j\neq i, n-i}.$$
\end{lemma}

\begin{proof}
Lemma~\ref{4dim} gives the claims for $n=2$. So we assume now $n > 2$.
The fact $\phi(P_{n-1})-\phi(P_0)=\phi(P_n)-\phi(P_1)$ implies (\ref{01=nn-1}). 
Let $w_{0,n-1}=\phi(P_{n-1})-\phi(P_0)$. Since  $w_{0,n-1}$ only divides
$\phi(P_{n-1})-\phi(P_0)$, $w_{0,n-1}$ can only be a weight between $P_0$ and $P_{n-1}$ (and between $P_n$ and $P_1$). As a weight between any
pair of points, it can only have multiplicity $1$.

For the following steps, we proceed similarly as in the proof of Lemma 5.16
in \cite{L2}.  First we observe that the integration of $[\omega]$ on the isotropy sphere $M^{\Z_{w_{0, n-1}}}$ is $1$, so we
get that $[\omega]$ is primitive integral. Since $n > 2$, there is a unique index $2$ fixed point $P_1$ and a unique coindex $2$ fixed point $P_{n-1}$; the proof of \cite[Lemma 5.15]{L2} still works, and we get that $w_{01}=\phi(P_1)-\phi(P_0)$
is a weight between $P_0$ and $P_1$, and $w_{n-1, n}=\phi(P_n)-\phi(P_{n-1})$ is a weight
between $P_{n-1}$ and $P_n$. By Proposition~\ref{0nnot}, 
$\phi(P_n)-\phi(P_0)$ is not a weight, so $w_{0, n-1}$ is the largest weight.
As in the proof of \cite[Lemma 5.16]{L2}, we use 
$$\{\mbox{weights at $P_0$} \}=\{\mbox{weights at $P_{n-1}$}\} \mod w_{0, n-1}$$
to get the following {\it possibilities} about the weights at $P_0$ and $P_{n-1}$:
\begin{enumerate}
\item For $j\in\big\{1, \cdots, \frac{n}{2}, (\frac{n}{2})', \cdots, n-2\big\}$, there is a weight $w_{0j}=\phi(P_j)-\phi(P_0)$ between
$P_0$ and some $P_m$ with $m\in\big\{2, \cdots, \frac{n}{2}, (\frac{n}{2})', \cdots, n-2\big\}$, or between $P_0$ and $P_n$, and correspondingly, there is a weight $w_{n-1, j} =\phi(P_j)-\phi(P_{n-1})$ between $P_{n-1}$ and some $P_l$ with $l\in\big\{1, \cdots, \frac{n}{2}, (\frac{n}{2})', \cdots, n-2\big\}$. (The $w_{01}$ here is different from the one above between $P_0$ and $P_1$, or one may view it as it appeared with multiplicities.)
\item $\bar w_{0n} = \frac{1}{2}\big(\phi(P_n)-\phi(P_0)\big)$ is a weight
 between $P_0$ and $P_n$ with multiplicity $1$, and correspondingly, 
$\bar w_{n-1, 1}=\frac{1}{2}\big(\phi(P_1)-\phi(P_{n-1})\big)$ is a weight between $P_{n-1}$ and $P_1$ with multiplicity $1$.
\end{enumerate}
We have $3$ claims about these possibilities:

 Claim 1: For $j\in\big\{1,  \cdots, \frac{n}{2}, (\frac{n}{2})', \cdots, n-2\big\}$, if $w_{n-1, j}=\phi(P_j)-\phi(P_{n-1})$ appears as a weight at $P_{n-1}$, then it has multiplicity $1$. 

 Claim 2: $w_{n-1, 1}$ and  $\bar w_{n-1, 1}$ cannot appear at the same  time.

 Claim 3: Neither $w_{n-1, 1}$ nor $\bar w_{n-1, 1}$ can appear.

The proofs of Claims 1 and 2 are the same as in the proof of \cite[Lemma 5.16]{L2}  (Lemma 5.1 in \cite{L2} might be used with very minor modification when the two index $n$ fixed points have equal moment map values). Now we prove Claim 3. First assume $w_{n-1, 1}$ appears.  Then by Claims 1 and 2, $w_{n-1, 1}$ has multiplicity $1$ and $\bar w_{n-1, 1}$ does not appear, hence $(2)$ does not occur. Since there are $n-1$ number of negative weights at $P_{n-1}$, and there are $n$ number of fixed points below $P_{n-1}$, by $(1)$ and Claim 1, there exists one $k\in\big\{2, \cdots, \frac{n}{2}, (\frac{n}{2})', \cdots, n-2\big\}$ such that $w_{n-1, k}=\phi(P_k)-\phi(P_{n-1})$ is not a weight at $P_{n-1}$. By symmetry, or similarly by using $-\phi$, we get that $w_{1, n-k} = \phi(P_{n-k})-\phi(P_1)$ is not a (positive) weight at $P_1$. Then we can write down the sets of weights at $P_{n-1}$ and $P_1$:
$$\big\{w_{n-1, j}=\phi(P_j)-\phi(P_{n-1})\big\}_{j\neq n-1,\, k},$$
$$\big\{w_{1j}=\phi(P_j)-\phi(P_1)\big\}_{j\neq 1,\, n-k},\,\,\,\mbox{where $ w_{1, n-1} = - w_{n-1,1}$},$$
for some  $k\in\big\{2, \cdots, \frac{n}{2}, (\frac{n}{2})', \cdots, n-2\big\}$.
(Here $n-\frac{n}{2}=(\frac{n}{2})'$.)
We compute
\begin{equation}\label{n=} 
\Gamma_1 - \Gamma_{n-1}=\sum w_{1j} - \sum w_{n-1, j} \neq n\big(\phi(P_{n-1})-\phi(P_1)\big),
\end{equation} 
which contradicts to Lemma~\ref{sub} and Theorem~\ref{equiv}.  Hence $w_{n-1, 1}$ does not appear. (We could also use $P_0$ and $P_{n-1}$, or
$P_0$ and $P_n$ to do the argument above.)
Next, assume $\bar w_{n-1, 1}$ appears, i.e., $(2)$ occurs. Then $w_{n-1, 1}$ does not appear (the corresponding $w_{01}$ in $(1)$ does not appear). Similar to the above, let us write down the sets of weights at $P_0$ and $P_n$:
$$\big\{w_{0j}=\phi(P_j)-\phi(P_0)\big\}_{j\neq 0,\, n,\, k} \cup\, \bar w_{0n},$$
$$\big\{w_{nj}=\phi(P_j)-\phi(P_n)\big\}_{j\neq n,\, 0,\, n-k} \cup \bar w_{n0},\,\,\,\mbox{where $\bar w_{n0} = - \bar w_{0n}$},$$
for some  $k\in\big\{2, \cdots, \frac{n}{2}, (\frac{n}{2})', \cdots, n-2\big\}$. 
If $\phi(P_{\frac{n}{2}}) < \phi(P_{(\frac{n}{2})'})$, or if $\phi(P_{\frac{n}{2}}) = \phi(P_{(\frac{n}{2})'})$ and $k\neq \frac{n}{2}, (\frac{n}{2})'$, the same argument as above gives a contradiction. Now assume 
$\phi(P_{\frac{n}{2}}) = \phi(P_{(\frac{n}{2})'})$ and $k = (\frac{n}{2})'$
(or $k = \frac{n}{2}$). We argue similarly as in the last part of the proof
of Proposition~\ref{0nnot} to get a contradiction by looking at the weights structure on a $4$-dimensional isotropy submanifold containing $P_0$,
$P_{\frac{n}{2}}$, $P_{(\frac{n}{2})'}$ and $P_n$.
By all the facts above, the claims on the weights follow.
\end{proof}

Next, we find the sets of weights at all the fixed points.
\begin{proposition}\label{01i2}
Let the circle act on a connected compact $2n$-dimensional symplectic manifold
$(M, \omega)$ with moment map $\phi\colon M\to\R$. Assume $[\omega]$ is an integral class and
$M^{S^1}=\big\{P_0,  \cdots, P_{\frac{n}{2}-1}, P_{\frac{n}{2}}, P_{(\frac{n}{2})'}, P_{\frac{n}{2}+1}\cdots, P_n\big\}$.
Assume the integer $\phi(P_{n-1})-\phi(P_0)=\phi(P_n)-\phi(P_1)$ occurs as a weight of the $S^1$-action at some fixed point.  
Then the set of weights at any $P_i$ is
\begin{equation}\label{wi2}
\big\{w_{ij}=\phi(P_j)-\phi(P_i)\big\}_{j\neq i, n-i}.
\end{equation}
Moreover,
\begin{equation}\label{kn=2}
\phi(P_k)-\phi(P_0) = \phi(P_n)-\phi(P_{n-k}), \,\,\,\mbox{for any $k$ with $1\leq k\leq\frac{n}{2}$}.
\end{equation}
Hence the sets of weights at the fixed points are isomorphic to those of the standard circle action on $\Gt_2(\R^{n+2})$, with $n\geq 2$ even, as in Example~\ref{grasse}.
\end{proposition}

\begin{proof}
Lemma~\ref{4dim} gives the results for $n=2$. For $n>2$, 
Lemma~\ref{01n} gives (\ref{wi2}) for $P_0$, $P_{n-1}$, $P_n$ and $P_1$, and
(\ref{kn=2}) for $k=1$. By Lemma~\ref{ktoi} below, for any $i=0, 1, n-1, n$, $w_{ij}$
is a weight between $P_i$ and $P_j$ for each $j\neq i, n-i$.

The rest of the proof is similar to that of Proposition 6.10 in \cite{L2},
keeping in mind that $\phi(P_n)-\phi(P_0)$ is not a weight, by Proposition~\ref{0nnot}. We use induction. Fix $i$ with $2\leq i\leq \frac{n}{2}$. Assume that for each $k$ with $0\leq k < i$, (\ref{wi2}) holds for $P_k$ and $P_{n-k}$,  $w_{kj}$ is a weight between $P_k$ and $P_j$ for each $j\neq k, n-k$, and $w_{n-k, j}$ is a weight between $P_{n-k}$ and $P_j$ for each $j\neq n-k, k$, moreover, (\ref{kn=2}) holds for all $1\leq k \leq i-1$.
We need to prove all these hold if we replace $k$ by $i$. 
Proceed similarly as in the proof of Proposition 6.10 in \cite{L2}, and use Lemma~\ref{ktoi} below.
\end{proof}

Lemma~\ref{ktoi} can be proved similarly as Lemmas 6.1 and 6.9  
in \cite{L2}.
\begin{lemma}\label{ktoi}
 Let the circle act on a connected compact $2n$-dimensional symplectic manifold
$(M, \omega)$ with moment map $\phi\colon M\to\R$, where $n > 2$ is even.  
Assume $M^{S^1}=\big\{P_0,  \cdots, P_{\frac{n}{2}-1}, P_{\frac{n}{2}}, P_{(\frac{n}{2})'}, P_{\frac{n}{2}+1}\cdots, P_n\big\}$ and $[\omega]$ is an integral class. Assume for a fixed $i$ with $0\leq i \leq \frac{n}{2}$, the set of weights at each $P_k$ with $k\leq i$ and $k\geq n-i$ is
$$\big\{w_{kj} = \phi(P_j) -\phi(P_k)\big\}_{j\neq k, n-k},$$
and for each $k < i$ and $k > n-i$, $w_{kj}$ is a weight  between $P_k$ and $P_j$
for each $j\neq k, n-k$. Then $w_{ij}$ is a weight between $P_i$ and $P_j$ for each 
$j\neq i, n-i$, and $w_{n-i, j}$ is a weight between $P_{n-i}$ and $P_j$ for each 
$j\neq n-i, i$.
\end{lemma}

\section{from the sets of weights to the integral cohomology ring and total Chern class of $M$}\label{weightringc}

In this section, we use the sets of weights of the $S^1$-action to determine
the integral cohomology ring and total Chern class of the manifold, proving
$(1)$ and $(2)$ of Theorem  \ref{lw}. 

\smallskip

We will do this in a few steps.

\begin{lemma}\label{nprod<}
Let the circle act on a connected compact $2n$-dimensional symplectic manifold
$(M, \omega)$ with moment map $\phi\colon M\to\R$. Assume $[\omega]$ is a primitive integral class and 
$M^{S^1}=\big\{P_0,  \cdots, P_{\frac{n}{2}-1}, P_{\frac{n}{2}}, P_{(\frac{n}{2})'}, P_{\frac{n}{2}+1}\cdots, P_n\big\}$. 
Let $x=[\omega]$. 
Then the following conditions are equivalent:
\begin{enumerate}
\item There exist classes $y, z\in H^n(M; \Z)$ such that
the generators of $H^{2i}\big(M; \Z\big)$ with $0\leq 2i\leq n$ are $1$, $x$, $\cdots$, $x^{\frac{n}{2}-1}$, $y$ and $z$,  where $y+z=x^{\frac{n}{2}}$.
\item For any $i\in \big\{0, 1,  \cdots, \frac{n}{2}, (\frac{n}{2})'\big\}$, 
the product $\Lambda_i^-$ of the negative weights at $P_i$ is 
\begin{equation}\label{nprod<mid}
\Lambda_i^- = \prod_{\index(P_j) < \index (P_i)}\big(\phi(P_j)-\phi(P_i)\big).
\end{equation}
\item  For any $i\in \big\{\frac{n}{2}, (\frac{n}{2})', \frac{n}{2}+1, \cdots,  n\big\}$, the product $\Lambda_i^+$ of the positive weights at $P_i$ is 
\begin{equation}\label{pprod<mid}
\Lambda_i^+ = \prod_{\index(P_j) > \index (P_i)}\big(\phi(P_j)-\phi(P_i)\big).
\end{equation}
\end{enumerate}
\end{lemma}

\begin{proof}
Let us prove $(1)$ and $(2)$ are equivalent.
Let $\{\Tilde\alpha_i\}_{i\in\{0,  \cdots, \frac{n}{2}, (\frac{n}{2})', \cdots, n\}}$ be the basis of $H^*_{S^1}(M; \Z)$ as an $H^*(\CP^{\infty}; \Z)$-module as in Proposition~\ref{basis}. Let  $\{\alpha_i\}_{i\in\{0, \cdots, \frac{n}{2}, (\frac{n}{2})', \cdots, n\}}$ be the restriction of the above basis to ordinary cohomology, which is a basis of the ordinary cohomology of $M$.         

First, consider  $0\leq i <  \frac{n}{2}$. Since
$\prod_{j<i}\big(\ut + \phi(P_j)t\big)|_{P_j} =0\,\,\,\mbox{for all $j<i$},$
where $\ut$ is the class  in Lemma~\ref{ut}, by Corollary~\ref{cor2},
\begin{equation}\label{aa}
\prod_{j<i}\big(\ut + \phi(P_j)t\big) = a_i\,\Tilde\alpha_i, \,\,\, \mbox{where $a_i\in\Z$}.
\end{equation}
Restricting (\ref{aa}) to ordinary cohomology, we get
$[\omega]^i =a_i \alpha_i$.
Restricting (\ref{aa}) to $P_i$,  we get 
$\prod_{j<i}\big(\phi(P_j) - \phi(P_i)\big) = a_i \Lambda_i^-$.
Hence $\alpha_i = [\omega]^i$  if and only if 
(\ref{nprod<mid}) holds for $i$  (with $0\leq i <\frac{n}{2}$).

Similarly,  by Corollary~\ref{cor2},
\begin{equation}\label{bb}
\prod_{j<\frac{n}{2}}\big(\ut + \phi(P_j)t\big) = b\,\Tilde\alpha_{\frac{n}{2}}+ b'\,\Tilde\alpha_{(\frac{n}{2})'},\,\,\mbox{where $b, b'\in\Z$}.
\end{equation}
Restricting (\ref{bb}) to ordinary cohomology,  we get
$[\omega]^{\frac{n}{2}} = b\alpha_{\frac{n}{2}} + b'\,\alpha_{(\frac{n}{2})'}.$
Restricting (\ref{bb}) respectively to $P_{\frac{n}{2}}$ and $P_{(\frac{n}{2})'}$,  we get 
$$\prod_{j<\frac{n}{2}}\big(\phi(P_j)-\phi(P_{\frac{n}{2}})\big) = b\Lambda^-_{\frac{n}{2}},\,\,\,\mbox{and}\,\,\,
\prod_{j<\frac{n}{2}}\big(\phi(P_j)-\phi(P_{(\frac{n}{2})'})\big) = b' \Lambda^-_{(\frac{n}{2})'}.$$
Hence $[\omega]^{\frac{n}{2}} = \alpha_{\frac{n}{2}} + \,\alpha_{(\frac{n}{2})'}$  if and only if (\ref{nprod<mid})
holds for $i=\frac{n}{2}$ and $(\frac{n}{2})'$. We take $y=\alpha_{\frac{n}{2}}$ and
$z=\alpha_{(\frac{n}{2})'}$.

Using $-\phi$, we can get  the Poincar\'e dual classes $y'$ and $z'$ of the above
$y$ and $z$. The existence of the new basis $1, x, \cdots, x^{\frac{n}{2}-1}, y', z'$ is equivalent to $(1)$, and by the same argument as above is equivalent to $(3)$. 
\end{proof}

Next, we aim to prove Proposition~\ref{nprod>}. As a preparation, we first prove two lemmas.

\begin{lemma}\label{iresi+1}
Let the circle act on a connected compact $2n$-dimensional symplectic manifold
$(M, \omega)$ with moment map $\phi\colon M\to\R$. Assume 
$$M^{S^1}=\big\{P_0,  \cdots, P_{\frac{n}{2}-1}, P_{\frac{n}{2}}, P_{(\frac{n}{2})'}, P_{\frac{n}{2}+1}\cdots, P_n\big\}.$$ 
Then for any $i\in\big\{(\frac{n}{2})', \frac{n}{2}+1, \cdots, n-1\big\}$,
\begin{equation}\label{i+1>i}
\Tilde\alpha_i|_{P_{i+1}} = -\frac{\Lambda_{i+1}}{\Lambda_i^+}\prod_{j > i+1}\frac{\phi(P_j)-\phi(P_i)}{\phi(P_j)-\phi(P_{i+1})}t^i.
\end{equation}
Here, the $\Tilde\alpha_i$'s are the classes in Proposition~\ref{basis}, $\Lambda_{i+1}$ is the product of all the weights at $P_{i+1}$,
and we use the convention $(\frac{n}{2})' + 1 = \frac{n}{2}+1$ and $t^{(\frac{n}{2})'} = t^{\frac{n}{2}}$.
\end{lemma}

\begin{proof}
 For any $i\in\big\{(\frac{n}{2})', \frac{n}{2}+1, \cdots, n-1\big\}$, note that
$$\deg\big(\Tilde\alpha_i\cdot \prod_{j>i +1}\big(\ut +\phi(P_j) t\big)\big) < 2n.$$
Using Theorem~\ref{AB.BV} to integrate this class on $M$, we get
$$0 = \frac{\Tilde\alpha_i|_{P_i}\cdot\prod_{j>i +1}\left(\phi(P_j)-\phi(P_i)\right)}{\Lambda_i} 
+ \frac{\Tilde\alpha_i|_{P_{i+1}}\cdot\prod_{j>i +1}\left(\phi(P_j)-\phi(P_{i+1})\right)}{\Lambda_{i+1}}.$$
Solving this, we get (\ref{i+1>i}).
\end{proof}

\begin{lemma}\label{iresi+12}
Let the circle act on a connected compact $2n$-dimensional symplectic manifold
$(M, \omega)$ with moment map $\phi\colon M\to\R$. Assume $[\omega]$ is a primitive
integral class and
$M^{S^1}=\big\{P_0,  \cdots, P_{\frac{n}{2}-1}, P_{\frac{n}{2}}, P_{(\frac{n}{2})'}, P_{\frac{n}{2}+1}\cdots, P_n\big\}$. 
Then for any $i\in\big\{(\frac{n}{2})', \frac{n}{2}+1, \cdots, n-1\big\}$,
$[\omega]\alpha_i =\alpha_{i+1}$ if and only if
\begin{equation}\label{i+1>i2}
\Tilde\alpha_i|_{P_{i+1}} =\frac{\Lambda^-_{i+1}}{\phi(P_i)-\phi(P_{i+1})}t^i.
\end{equation}
Here, the $\Tilde\alpha_i$'s and $\alpha_i$'s are the classes in Proposition~\ref{basis}, and we use the convention $(\frac{n}{2})' + 1 = \frac{n}{2}+1$ and $t^{(\frac{n}{2})'} = t^{\frac{n}{2}}$.
\end{lemma}

\begin{proof}
For any $i\in\big\{(\frac{n}{2})', \frac{n}{2}+1, \cdots, n-1\big\}$, note that 
$$\deg\big(\big(\ut +\phi(P_i) t\big)\Tilde\alpha_i\big) = \deg (\Tilde\alpha_{i+1}),\,\,\,\mbox{and}$$
$$\big(\big(\ut +\phi(P_i) t\big)\Tilde\alpha_i \big)|_{P_j} =0, \,\,\,\, \forall \,\,\, P_j \,\,\,\mbox{with $\phi(P_j)<\phi(P_{i+1})$}.$$
By Corollary~\ref{cor2}, 
$\big(\ut +\phi(P_i) t\big)\Tilde\alpha_i = a_{i+1}\Tilde\alpha_{i+1}$ with $a_{i+1}\in\Z$. 
Restricting it respectively to ordinary cohomology and to $P_{i+1}$, we obtain the claim.
\end{proof}

\begin{proposition}\label{nprod>}
 Let the circle act on a connected compact $2n$-dimensional symplectic manifold
$(M, \omega)$ with moment map $\phi\colon M\to\R$. Assume $[\omega]$ is a primitive integral class and 
$M^{S^1}=\big\{P_0,  \cdots, P_{\frac{n}{2}-1}, P_{\frac{n}{2}}, P_{(\frac{n}{2})'}, P_{\frac{n}{2}+1}\cdots, P_n\big\}$. 
Let $x=[\omega]$. 
Then the following conditions are equivalent:
\begin{enumerate}
\item There exist classes $y, z\in H^n(M; \Z)$ such that
the generators of $H^*(M; \Z)$ are $1$, $x$, $\cdots$, $x^{\frac{n}{2}-1}$,
$y$, $z$, $xy=xz =\frac{1}{2} x^{\frac{n}{2}+1}$, $x^2y = x^2 z =\frac{1}{2} x^{\frac{n}{2}+2}$, $\cdots$, $x^{\frac{n}{2}}y = x^{\frac{n}{2}}z =\frac{1}{2}x^n$, where $x^{\frac{n}{2}} =y+z$.
\item  Lemma~\ref{nprod<}  $(2)$ holds, and
for any $i$ with $\frac{n}{2}+1\leq i\leq n$, the product of the negative weights at $P_i$ is
\begin{equation}\label{nprod>mid}
\Lambda_i^- =\frac{\prod_{\index (P_j) < \index (P_i)}\big(\phi(P_j)-\phi(P_i)\big)}{\big(\phi(P_{\frac{n}{2}}) -\phi(P_i)\big) + \big(\phi(P_{(\frac{n}{2})'})-\phi(P_i)\big)}.
\end{equation}
\item Lemma~\ref{nprod<}  $(3)$ holds, and
for any $i$ with $0\leq i\leq \frac{n}{2}-1$, the product of the positive weights at $P_i$ is
\begin{equation}\label{pprod<mid}
\Lambda_i^+ =\frac{\prod_{\index (P_j) > \index (P_i)}\big(\phi(P_j)-\phi(P_i)\big)}{\big(\phi(P_{\frac{n}{2}}) -\phi(P_i)\big) + \big(\phi(P_{(\frac{n}{2})'})-\phi(P_i)\big)}.
\end{equation}
\end{enumerate}
\end{proposition}

\begin{proof}
Let $\{\Tilde\alpha_i\}_{i\in\{0,  \cdots, \frac{n}{2}, (\frac{n}{2})', \cdots, n\}}$ and $\{\alpha_i\}_{i\in\{0,  \cdots, \frac{n}{2}, (\frac{n}{2})', \cdots, n\}}$ be respectively the basis of the equivariant and ordinary cohomology of $M$ as in Proposition~\ref{basis}. 

First, consider $i=\frac{n}{2}+1$. Since $\deg\left(\prod_{j \leq \frac{n}{2}}\big(\ut+\phi(P_j)t\big)\right) = n+2$, and
$\prod_{j \leq \frac{n}{2}}\big(\ut+\phi(P_j)t\big)|_{P_j} =0$ for all $j \leq \frac{n}{2}$, by Corollary~\ref{cor2},
\begin{equation}\label{QQ+1}
\prod_{j\leq\frac{n}{2}}\big(\ut+\phi(P_j)t\big)=a_{(\frac{n}{2})'} t\,\Tilde\alpha_{(\frac{n}{2})'} + b_{\frac{n}{2}+1}\,\Tilde\alpha_{\frac{n}{2}+1}, 
\end{equation}
where $a_{(\frac{n}{2})'}, b_{\frac{n}{2}+1}\in\Z$.
Restricting (\ref{QQ+1}) to ordinary cohomology, we get
$$[\omega]^{\frac{n}{2}+1} = b_{\frac{n}{2}+1}\alpha_{\frac{n}{2}+1}.$$
Restricting (\ref{QQ+1}) to $P_{(\frac{n}{2})'}$,  we get
$$\prod_{j \leq \frac{n}{2}}\big(\phi(P_j)-\phi(P_{(\frac{n}{2})'})\big) 
= a_{(\frac{n}{2})'}\Lambda^-_{(\frac{n}{2})'}.$$
Restricting  (\ref{QQ+1}) to $P_{\frac{n}{2}+1}$, we get
$$\prod_{j \leq \frac{n}{2}}\big(\phi(P_j)-\phi(P_{\frac{n}{2}+1})\big)=
a_{(\frac{n}{2})'}\Tilde\alpha_{(\frac{n}{2})'}|_{P_{\frac{n}{2}+1}} +b_{\frac{n}{2}+1}\,\Lambda^-_{\frac{n}{2}+1}.$$
Similarly, for any $i\geq\frac{n}{2}+2$, 
we can write
\begin{equation}\label{ii+1}
\prod_{\frac{1}{2}\index (P_j) <\, i-1}\left(\ut+ \phi(P_j)t\right)
=a_{i-1}\,t\,\Tilde\alpha_{i-1} + b_i\,\Tilde\alpha_i, 
\end{equation}
where $a_{i-1}, b_i\in\Z$.  Restricting (\ref{ii+1}) to ordinary cohomology, we get
$$[\omega]^i = b_i\alpha_i.$$
Restricting (\ref{ii+1}) to $P_{i-1}$, 
we get
$$\prod_{\frac{1}{2}\index (P_j) <\, i -1}\left(\phi(P_j)-\phi(P_{i-1})\right) =a_{i-1}\,\Lambda_{i-1}^-.$$
Restricting (\ref{ii+1}) to $P_i$, we get 
$$\prod_{\frac{1}{2}\index (P_j) <\, i -1}\left(\phi(P_j)-\phi(P_i)\right) =  a_{i-1}\Tilde\alpha_{i-1}|_{P_i} + b_i\Lambda_i^-.$$

Let $y=\alpha_{\frac{n}{2}}$ and  $z=\alpha_{(\frac{n}{2})'}$. We argue below that
$(1)$ and $(2)$ are equivalent.

Assume $(1)$ holds. Then Lemma~\ref{nprod<}  $(2)$ holds, $b_i=2$ for all  $\frac{n}{2}+1\leq i\leq n$, and (\ref{i+1>i2}) holds.
Using (\ref{nprod<mid}), we can get $a_{(\frac{n}{2})'}$, then get $\Lambda^-_{\frac{n}{2}+1}$. Inductively, we can
get $a_{i-1}$, and then get $\Lambda_i^-$ for all $i\geq \frac{n}{2}+2$. 

Conversely, assume $(2)$ holds. Then by Lemma~\ref{nprod<}, we have the generators
of $H^{2i}(M; \Z)$ for $0\leq 2i\leq n$ as in $(1)$ and $x^{\frac{n}{2}}=y+z$, and
Lemma~\ref{nprod<}  $(3)$ holds. By (\ref{i+1>i}), we get that (\ref{i+1>i2}) holds. Then Lemma~\ref{iresi+12} implies that $x^k z =\alpha_{\frac{n}{2}+k}$ for all $1\leq k\leq\frac{n}{2}$. Moreover, from the above equalities, we can solve $a_{(\frac{n}{2})'}$ and all $a_{i-1}$, and then get $b_i=2$ for all  $\frac{n}{2}+1\leq i\leq n$. Note that
$x^ky+ x^k z = x^{\frac{n}{2}+k}$ for all $1\leq k\leq\frac{n}{2}$. Hence 
$x^ky = x^k z = \frac{1}{2}x^{\frac{n}{2}+k} =\alpha_{\frac{n}{2}+k}$  are generators of $H^{n+2k}(M; \Z)$ for all  $1\leq k\leq\frac{n}{2}$.

Using $-\phi$ as a Morse function, we get the Poincar\'e dual of the classes
in $(1)$, they form a new basis (satisfying the same relations). 
The existence of the new basis is equivalent to $(1)$, and by the same arguments as above, is equivalent to $(3)$.
\end{proof}

Next, having Propositions~\ref{01i2} and \ref{nprod>}, we look at the ring structure of the manifold.

\begin{proposition}\label{weights-ring}
Let the circle act on a  connected compact $2n$-dimensional symplectic manifold
$(M, \omega)$ with moment map $\phi\colon M\to\R$. Assume $[\omega]$ is an integral class and
$M^{S^1}=\big\{P_0,  \cdots, P_{\frac{n}{2}-1}, P_{\frac{n}{2}}, P_{(\frac{n}{2})'}, P_{\frac{n}{2}+1}\cdots, P_n\big\}$.
Assume the integer $\phi(P_{n-1})-\phi(P_0)=\phi(P_n)-\phi(P_1)$ occurs as a weight of the $S^1$-action at some fixed point.
Then Proposition~\ref{nprod>}  $(1)$ holds, moreover,
$$x^{\frac{n}{2}}y=x^{\frac{n}{2}}z = yz,\, \,y^2=z^2=0,\,\,\,\mbox{when $\dim(M)=4m$ with $m$ odd},\,\,\,\mbox{and}$$
$$x^{\frac{n}{2}}y=x^{\frac{n}{2}}z =y^2=z^2, \,\, yz=0, \,\,\,\mbox{when $\dim(M)=4m$ with $m$  even}.$$
Hence, as rings, $H^*(M; \Z)\cong  H^*\big(\Gt_2(\R^{n+2}); \Z\big)$ with $n\geq 2$ even.
\end{proposition}

\begin{proof}
By Proposition~\ref{01i2}, Proposition~\ref{nprod>} $(2)$ (and $(3)$) holds. Hence Proposition~\ref{nprod>}  $(1)$ holds. It remains to determine the relations of $y^2$, $z^2$ and $yz$ with the top degree generators. First, since $x^{\frac{n}{2}} = y +z$,
we have $x^{\frac{n}{2}}y = y^2 + yz$ and $x^{\frac{n}{2}}z = z^2 + yz$. Since 
$x^{\frac{n}{2}}y=x^{\frac{n}{2}}z$, we have $y^2 = z^2$.

Recall that $y=\alpha_{\frac{n}{2}}$ and $z=\alpha_{(\frac{n}{2})'}$, where  
$\alpha_{\frac{n}{2}}$ and $\alpha_{(\frac{n}{2})'}$ are respectively the restrictions of  $\Tilde\alpha_{\frac{n}{2}}$ and $\Tilde\alpha_{(\frac{n}{2})'}$ to ordinary cohomology.
For any $k \geq \frac{n}{2}+1$,
note that
$$\Big(\Tilde\alpha_{\frac{n}{2}}\cdot\prod_{j \in\{\frac{n}{2}, \frac{n}{2}+1,\, \cdots,\, k-1\}}\big(\ut +\phi(P_j) t\big)\Big)\arrowvert_{P_j} = 0,\,\,\,\forall \,\, P_j \,\,\,\mbox{with $\phi(P_j) < \phi(P_k)$}.$$
By Corollary~\ref{cor2}, 
$$\Tilde\alpha_{\frac{n}{2}}\cdot\prod_{j \in\{\frac{n}{2}, \frac{n}{2}+1,\, \cdots, \,k-1\}}\big(\ut +\phi(P_j) t\big)=a_k\Tilde\alpha_k, \,\,\,\mbox{with $a_k\in\Z$}.$$
By restricting it to ordinary cohomology, we see that $a_k=1$. Then restricting it to $P_k$, we get
$$\Tilde\alpha_{\frac{n}{2}}|_{P_k} = \frac{\Lambda_k^-}{\prod_{j \in\{\frac{n}{2}, \frac{n}{2}+1, \,\cdots,\, k-1\}}\big(\phi(P_j)-\phi(P_k)\big)}.$$
Similarly, we have
$$\Tilde\alpha_{(\frac{n}{2})'}|_{P_k} = \frac{\Lambda_k^-}{\prod_{j \in\{(\frac{n}{2})', \frac{n}{2}+1,\, \cdots, \,k-1\}}\big(\phi(P_j)-\phi(P_k)\big)},\,\,\,\forall\,\, k \geq \frac{n}{2}+1.$$
By Theorem~\ref{AB.BV}, 
$$\int_M y^2 =\int_M \big(\Tilde\alpha_{\frac{n}{2}}\big)^2= \frac{\Lambda^-_{\frac{n}{2}}    \Lambda^-_{\frac{n}{2}}}{\Lambda_{\frac{n}{2}}} + \sum_{k\geq\frac{n}{2}+1}\frac{(\Tilde\alpha_{\frac{n}{2}})^2|_{P_k}}{\Lambda_k} = \frac{\Lambda^-_{\frac{n}{2}}}{\Lambda^+_{\frac{n}{2}}} + \sum_{k\geq\frac{n}{2}+1}\frac{(\Tilde\alpha_{\frac{n}{2}})^2|_{P_k}}{\Lambda_k}, \,\,\,\mbox{and}$$
$$\int_M yz = \int_M \Tilde\alpha_{\frac{n}{2}} \Tilde\alpha_{(\frac{n}{2})'} =\sum_{k\geq\frac{n}{2}+1}\frac{  \Tilde\alpha_{\frac{n}{2}}|_{P_k}\cdot\Tilde\alpha_{(\frac{n}{2})'}|_{P_k}}{\Lambda_k}.$$ 
Since the integrals $\int_M y^2$ and  $\int_M yz$ should not depend on the moment map values of the fixed points,  we may assume that $\phi(P_{\frac{n}{2}})=\phi(P_{(\frac{n}{2})'})$. 
Then by the expressions above,
$\Tilde\alpha_{\frac{n}{2}}|_{P_k}=\Tilde\alpha_{(\frac{n}{2})'}|_{P_k},\,\,\forall \,\,k\geq\frac{n}{2}+1$,
and by Proposition~\ref{01i2}, $\Lambda^-_{\frac{n}{2}} = (-1)^{\frac{n}{2}}\Lambda^+_{\frac{n}{2}}$. These together give $$\int_M y^2 - \int_M yz =  (-1)^{\frac{n}{2}}.$$
While $x^{\frac{n}{2}}y = y^2 + yz$ gives
$$\int_M y^2 + \int_M yz =\int_M x^{\frac{n}{2}}y =1.$$  
Hence 
$$\int_M y^2 = \frac{1+(-1)^{\frac{n}{2}}}{2}.$$
\end{proof}

Finally, we use the weights in Proposition~\ref{01i2} to determine $c(M)$.

\begin{proposition}\label{weights-tc}
Let the circle act on a connected compact $2n$-dimensional symplectic manifold
$(M, \omega)$ with moment map $\phi\colon M\to\R$. Assume  $[\omega]$ is an integral class and
$M^{S^1}=\big\{P_0, \cdots, P_{\frac{n}{2}-1}, P_{\frac{n}{2}}, P_{(\frac{n}{2})'}, P_{\frac{n}{2}+1}, \cdots, P_n\big\}$.
Assume $\phi(P_{n-1})-\phi(P_0)=\phi(P_n)-\phi(P_1)$ holds and this integer occurs as a weight of the $S^1$-action at some fixed point.
Then the total Chern class of $M$ is $c(M)=\frac{(1+[\omega])^{n+2}}{1+2[\omega]}$, hence is
isomorphic to $c\big(\Gt_2(\R^{n+2})\big)$.
\end{proposition}

\begin{proof}
By the injectivity theorem (\cite{K, TW}), the restriction map
$$H^*_{S^1}(M; \Z)\to H^*_{S^1}(M^{S^1}; \Z)$$
 is injective. Let 
$I=\{0,  \cdots, \frac{n}{2}, (\frac{n}{2})', \cdots, n\}$.
Consider the class 
$$\alpha=\frac{\prod_{j\in I}\big(1+\ut +\phi(P_j)t\big)}{1+\big(\ut+\phi(P_0)t\big) +\big(\ut+\phi(P_n)t\big)}.$$
By Proposition~\ref{01i2}, 
$\big(\phi(P_0) - \phi(P_i)\big) +\big(\phi(P_n) - \phi(P_i)\big)= \phi(P_{n-i})-\phi(P_i)$ for all
$i\in I$.  Using this, and  Proposition~\ref{01i2}, we can check that 
$$\alpha|_{P_i}=c^{S^1}(M)|_{P_i},\,\,\,\forall \,\, i\in I.$$
Hence $c^{S^1}(M) = \alpha$.  Restricting this to ordinary cohomology, we obtain our claim.
\end{proof}


\begin{thebibliography}{99}
\bibitem{AB} M. Atiyah and R. Bott, {\em The moment map and equivariant cohomology}, Topology, {\bf 23} (1984), 1-28.
\bibitem{BV} N. Berline and M. Vergne, {\em Classes caract\'eristiques \'equivariantes, formule de localisation en cohomologie \'equivariante}, C. R. Acad. Sci. Paris, {\bf 295} (1982), 539-541.
\bibitem{GS} L. Godinho and S. Sabatini, {\em New tools for classifying Hamiltonian circle actions with isolated fixed points}, Foundations in Computational Mathematics, 14(2014), 791-860.
\bibitem{H} A. Hattori, {\em $S^1$ actions on unitary manifolds and quasi-ample line bundles}, J. Fac. Sci. Univ. Tokyo Sect. IA, Math., {\bf 31},
no. 3  (1984), 433-486.
\bibitem{JT} D. Jang and S. Tolman, {\em Hamiltonian circle actions on eight dimensional manifolds with minimal fixed sets},  Transformation groups, vol. {\bf 22}, no. 2, 2017, 353-359.
\bibitem{K} F. Kirwan, {\em Cohomology of Quotients in Symplectic and Algebraic Geometry}, Princeton University Press, 1984.
\bibitem{L} H. Li, {\em Certain circle actions on K\"ahler manifolds}, International Mathematics Research Notices,  Vol. 2014, No. 18, 5187-5202.
\bibitem{L1} H. Li, {\em Hamiltonian circle actions with fixed point set almost minimal}, Math. Z., 
{\bf 293}, (2019), no. 3, 1315-1336.
\bibitem{L2} H. Li, {\em Hamiltonian circle actions with minimal isolated fixed points}, arXiv: 1407.1948v5.
\bibitem{LOS} H. Li, M. Olbermann and D. Stanley,
{\em  One connectivity and finiteness of Hamiltonian $S^1$-manifolds
with minimal fixed sets}, J. of Lond. Math. Soc. (2)  {\bf 92}  (2015),  no. 2, 284-310.
\bibitem{LT} H. Li and S. Tolman, {\em Hamiltonian circle actions with minimal fixed sets},  International
Journal of Mathematics {\bf 23}, no. 8 (2012), 1250071.
\bibitem{M} D. McDuff, {\em Some $6$-dimensional Hamiltonian $S^1$ manifolds},
J. of Topology, {\bf 2} (2009), no. 3, 589--623.
\bibitem{T} S. Tolman, {\em On a symplectic generalization of Petrie's
conjecture},  Trans. Amer. Math. Soc., {\bf 362} (2010), 3963-3996.
\bibitem{TW} S. Toman and J. Weitsman, {\em The cohomology rings of symplectic quotients}, Comm. Anal. Geom., {\bf 11} (2003), no. 4, 751-773.
\end{thebibliography}
\end{document}